\newcommand{\X}{\ensuremath{\mathbb{X}}}
\newcommand{\Z}{\ensuremath{\mathbb{Z}}}
\newcommand{\R}{\ensuremath{\mathbb{R}}}
\renewcommand{\H}{\ensuremath{\mathbb{H}}}
\newcommand{\T}{\ensuremath{\mathcal{T}}}
\renewcommand{\S}{\ensuremath{\mathbb{S}}}
\newcommand{\F}{\ensuremath{\mathcal{F}}}
\DeclareMathOperator{\supp}{supp}
\DeclareMathOperator{\conv}{conv}
\newtheorem{thm}{Theorem}[section]
\newtheorem{lemma}[thm]{Lemma}
\newtheorem{cor}[thm]{Corollary}
\newtheorem{defi}[thm]{Definition}
\begin{document}

\title{The Lonely Vertex Problem}

\author{D. Frettl\"oh}
\address{Fakult\"at f\"ur Mathematik, Universit\"at Bielefeld,
Postfach 100131, 33501 Bielefeld,  Germany}
\email{dirk.frettloeh@math.uni-bielefeld.de}
\urladdr{http://www.math.uni-bielefeld.de/baake/frettloe}

\author{A. Glazyrin}
\address{Moscow State University, Leninskie Gory, 119992 Moscow GSP-2, 
  Russia}
\email{xoled@rambler.ru}

\begin{abstract}
In a locally finite tiling of $\R^n$ by convex polytopes, each point
$x \in \R^n$ is either a vertex of at least two tiles, or no vertex at
all. 
\end{abstract}

\maketitle

\section{Introduction} 
In \cite{fr}, the following problem was stated in the context of
finite local complexity of self-similar substitution tilings, see
Section \ref{apps} for details. Throughout the text, 'vertex' always
means the vertex of a convex polytope in the usual geometric sense,
see for instance \cite{z}. It means neither a combinatorial vertex of
a tile, nor the vertex of a tiling in the sense of \cite{gs} (that is,
an isolated point of the intersection of finitely many tiles of a
tiling).   

{\bf Question 1:}
In a locally finite tiling $\T$ of $\R^n$, where all tiles are convex
polytopes, is there a point $x$ which is the vertex of exactly one tile?

In other words: Is there a 'lonely vertex' in a locally finite polytopal
tiling?

For tilings in dimension $n=1$ and $n=2$, it is easy to see that the
answer is negative. In the sequel we show that the answer is negative
for all dimensions $n$. In the remainder of this section we will
fix the notation and discuss the necessity of the requirement 'locally
finite'. In Section 2 we
obtain the main results, namely, Theorem \ref{lincomb}, Theorem
\ref{sphere}, and the answer to Question 1 in Theorem \ref{lvert}. In
Section 3 we apply these results to prove a condition for local finite
complexity of self-similar substitution tilings with integer factor. 
Section 4 contains some further remarks.

Let $\R^n$ denote the $n$-dimensional Euclidean space. The
$n$-dimensional unit sphere is denoted by $\S^n$. For two points $x,y
\in \R^n$, the line segment with endpoints $x$ and $y$ is denoted by
$\overline{xy}$. A
(convex) {\em polyhedron} is the intersection of finitely many
closed halfspaces. A (convex) {\em polytope} is a bounded polyhedron.
In the following, only convex polytopes are considered. Thus we drop
the word 'convex' in the sequel, the term 'polytope' always means
convex polytope. A spherical polytope is the intersection of a sphere
with centre $x$ with finitely many halfspaces $H_i$, where $x \in 
\bigcap_i H_i$.

Let $\X$ be either a Euclidean or a spherical space. A collection of
polytopes $\T=\{T^{}_n\}^{}_{n \ge 0}$ which is a covering of
$\X$ --- that is, the union of all polytopes $T_i$ equals 
$\X$ --- as well as a packing of $\X$ --- that is, the interiors of the
polytopes are pairwise disjoint --- is called a (polytopal) \emph{tiling}.
A tiling $\T$ is called {\em locally finite} if each bounded set 
$U \in \X$ intersects only finitely many tiles of $\T$.

If we do not require the tiling to be locally finite, lonely vertices
are possible. For instance, consider a tiling in $\R^2$ which 
contains the following tiles (see Figure \ref{nonlfex}): A rectangle
$R$ with vertices $(1,0), \; (-1,0), \; (-1,-1), \; (1,-1)$, a square
$S$ with vertices $(0,0), \; (0,1), \; (-1,1),\; (-1,0)$, and
rectangles $T_k$ with vertices $(\frac{1}{2^k},0), \; (\frac{1}{2^k},1), \;
(\frac{1}{2^{k+1}},1)\;
(\frac{1}{2^{k+1}},0),$, where $k \ge 0$. Such a tiling is obviously not
locally finite: each sphere with centre $(0,0)$ intersects infinitely 
many tiles. The tile $S$  has $(0,0)$ as a vertex, and $(0,0)$ is
vertex of no other tile. That means, such a tiling contains a lonely
vertex at $(0,0)$. The requirement of local finiteness is therefore 
necessary.

\begin{figure}
\includegraphics[width=120mm]{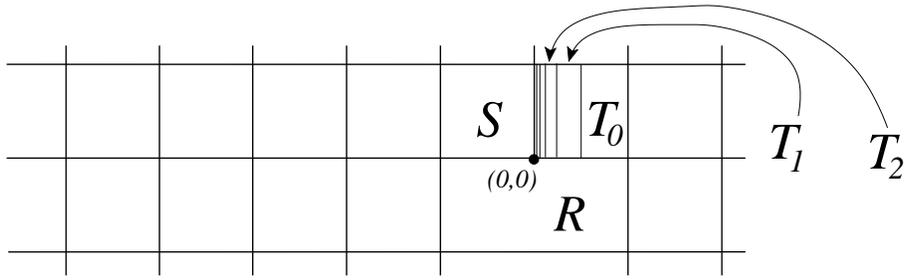}
\caption{\label{nonlfex} A lonely vertex at $(0,0)$ in a tiling which
  is not locally finite.}
\end{figure}

\section{The main result} \label{main}

We say that a polytope $P$ and a hyperplane $H$ are {\em just
  touching} if $P \cap H \ne \varnothing$, but $\mbox{int}(P) \cap H
= \varnothing$, where $\mbox{int}(P)$ denotes the interior of $P$.
We define the indicator $\textbf{I}_P$ for the convex
$n$-dimensional spherical polytope $P$ as the function that equals
1 in all internal points of $P$ and 0 else. In what follows we
say that two functions are equal if they are equal in all
points except in a set of Lebesgue measure zero. We call a convex
$n$-dimensional spherical polytope a {\em B-type polytope} if it
contains two ends of some diameter of the sphere, and an {\em A-type
polytope} else.

\begin{thm} \label{lincomb}
The indicator of any A-type polytope cannot be equal to the linear
combination of indicators of a finite number of B-type polytopes.
\end{thm}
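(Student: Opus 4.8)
The plan is to induct on the dimension $n$, using a blow-up (tangent cone) argument to pass from an alleged identity on $\S^n$ to one on $\S^{n-1}$. We may assume the A-type polytope $P$ is $n$-dimensional, since otherwise $\textbf{I}_P$ vanishes almost everywhere. The base case $n=0$ is immediate: $\S^0$ is two points, its only B-type polytope is $\S^0$ itself, so every linear combination of B-type indicators is constant, whereas the indicator of an A-type polytope (a single point) is not, and $\S^0$ has no nonempty null set.

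I would first record the structure of a full-dimensional B-type polytope $Q\subseteq\S^n$ with $\pm v\in Q$: the cone $\widehat Q=\{tx:t\ge 0,\ x\in Q\}$ then contains the line $\R v$, hence splits as $\widehat Q=\R v\oplus(\widehat Q\cap v^{\perp})$, so $Q$ is the spherical join of $\{v,-v\}$ with the spherical polytope $R:=\widehat Q\cap v^{\perp}\cap\S^n$. Two consequences are needed: $\textbf{I}_Q$ is constant along every meridian (great semicircle) from $v$ to $-v$; and $v,-v\in\partial Q$ unless $Q=\S^n$. From the first I get the key local fact: if $x_0\in\partial Q$ and $x_0\notin\{v,-v\}$, then the tangent cone of $Q$ at $x_0$ contains the full tangent line to the meridian through $x_0$, so the link $L_{Q,x_0}$, viewed in the unit sphere of $T_{x_0}\S^n=x_0^{\perp}$, is B-type; moreover $L_{Q,v}=L_{Q,-v}=R$ as subsets of the unit sphere of $v^{\perp}=T_v\S^n=T_{-v}\S^n$, and $L_{Q,x_0}$ is the whole $\S^{n-1}$ when $x_0\in\operatorname{int}Q$.

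Now suppose $\textbf{I}_P=\sum_{i=1}^m c_i\textbf{I}_{Q_i}$ almost everywhere, with each $Q_i$ B-type, say $\pm v_i\in Q_i$. Since $P$ is A-type its cone is pointed, so $P$ lies in an open hemisphere and has a vertex $x_0$ (coming from an extreme ray of the cone); then $L_{P,x_0}$ is an $(n-1)$-dimensional A-type polytope, while $-x_0$ is at positive distance from $P$, so its link is empty. Blowing up the identity at $x_0$ and at $-x_0$ — the rescaled indicators converge in $L^1_{\mathrm{loc}}$ to the indicators of the respective links — and subtracting gives, on $\S^{n-1}$,
\[
\textbf{I}_{L_{P,x_0}}=\sum_{i=1}^m c_i\bigl(\textbf{I}_{L_{Q_i,x_0}}-\textbf{I}_{L_{Q_i,-x_0}}\bigr).
\]
It remains to see that the right-hand side is a linear combination of indicators of B-type polytopes of $\S^{n-1}$ (counting $\S^{n-1}$ as B-type and the empty link as contributing $0$). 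If $x_0\in\{v_i,-v_i\}$ for some $i$, then also $-x_0\in\{v_i,-v_i\}$, and by the structural fact $L_{Q_i,x_0}=L_{Q_i,-x_0}=R_i$, so that term cancels; this is precisely where the possibly A-type cross-sections $R_i$ are eliminated, and I expect the identification of the two link spheres of $\S^{n-1}$ (and the compatibility of the two blow-ups) to be the point needing the most care. If $x_0\notin\{v_i,-v_i\}$, then neither is $-x_0$, and since $v_i,-v_i\in Q_i$ and $v_i\in\partial Q_i$, the local fact above shows each of $L_{Q_i,x_0},L_{Q_i,-x_0}$ is B-type, the whole sphere, or empty. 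Therefore $\textbf{I}_{L_{P,x_0}}$ — an A-type indicator on $\S^{n-1}$ — is a finite linear combination of B-type indicators, contradicting the inductive hypothesis.
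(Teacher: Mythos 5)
Your proof is correct, and although it shares the paper's skeleton --- induction on dimension, an antipodal ``subtraction trick'' forcing the dangerous B-type terms to cancel, and descent to a lower-dimensional A-type identity --- the localization is genuinely different. The paper slices along a great hypersphere: it rotates a \emph{facet} of $P$ into a hyperplane $H$ through the centre, takes one-sided limits of the identity from the two sides of $H$, and subtracts; a $Q_i$ whose interior crosses $H$ cancels, while one just touching $H$ contributes $Q_i\cap H$, which is B-type because the diameter $\overline{k\ell}\subset Q_i$ must lie in $H$. You instead blow up at a \emph{vertex} $x_0$ of $P$ and at $-x_0$ and subtract; a $Q_i$ containing both of $\pm x_0$ cancels via the join decomposition, and otherwise the meridian through $x_0$ places a full line in the tangent cone, so each link is B-type, all of $\S^{n-1}$, or empty. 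Each route needs its own small lemma (the paper: sections of just-touching B-type polytopes are B-type; you: A-type polytopes have vertices with A-type links, plus the join/meridian structure). Your version even has a technical advantage: the paper's step ``$f_0^{\pm}=0$ because they are limits of sequences equal to $0$'' silently restricts an a.e.\ identity to the null slices $\{x_1=2^{-m}\}$ (one needs a Fubini argument to pick good slices), whereas your rescaling maps are measure-class-preserving diffeomorphisms at each fixed scale, so the blown-up identity holds a.e.\ before passing to the limit. One small point of care: the clean dichotomy is whether $\{x_0,-x_0\}\subseteq Q_i$ (then re-choose $v_i:=x_0$ and cancel) rather than whether $x_0\in\{v_i,-v_i\}$ for the originally chosen poles; but, as you observe, in the non-cancelling case the meridian argument applies to whatever antipodal pair of $Q_i$ was fixed, so the conclusion stands.
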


\begin{proof} We will prove this theorem by induction on the dimension
$n$ of the embedding space $\R^n \supset \S^{n-1}$.  Base of 
induction: $n=1$. This case is obvious because the unit sphere in
$\mathbb{R}^1$, namely, $\S^0 = \{-1,1\}$, is the only B-type polytope
in $\S^0$. 

The step of induction is much more demanding. Let Theorem
\ref{lincomb} be true for all dimensions less than $n$. We assume that
it's false for $n$. So there is one A-type polytope $P$ and $k$ B-type
polytopes $Q_1,\ldots,Q_k$ such that
\[\textbf{I}_P-\sum_1^k \alpha_i \textbf{I}_{Q_i}=0 \]
for some $\alpha_i \in\R$. Consider
any $(n-1)$-dimensional hyperplane containing the centre $x$ of
the sphere, for instance $\{ x_1=0 \}$. For $f:\mathbb{R}^n \rightarrow
\mathbb{R}$ we define $f_0^+$ and $f_0^-$:
\[f_0^+(x_2,\ldots,x_n)=\lim_{m \rightarrow
\infty}f(\frac 1 {2^m}, x_2,\ldots,x_n)\]
\[f_0^-(x_2,\ldots,x_n)=\lim_{m
\rightarrow \infty}f(-\frac 1 {2^m}, x_2,\ldots,x_n)\] if these
limits exist. Let $T$ be an $n$-dimensional spherical polytope,
and let $T_0=T \cap \{x_1=0 \}$. $T_0$ is a spherical polytope of
lesser dimension.

\begin{lemma}
For $f=\textbf{I}_T$, the function $f_0^+$ exists in all points of
$\mathbb{R}^{n-1}$. Moreover, $f_0^+=\textbf{I}_{T_0}$ holds if not
all the internal points of $T$ are lying in negative semispace,
and $f_0^+=0$ else.
\end{lemma}

\begin{proof} There are three cases: the interior of $T$ intersects
$\{ x_1=0 \}$, or $T$ just touches this hyperplane and lies in the positive
semispace, or $T$ just touches this hyperplane and lies in the negative
semispace. All these cases are rather obvious.
\end{proof}

The same lemma is true for $f_0^-$.

Let us consider now $f=\textbf{I}_P-\sum_1^k \alpha_i
\textbf{I}_{Q_i}$. Without loss of generality, let one of the
$(n-1)$-dimensional faces of $P$ be contained in $\{ x_1=0 \}$, and
let $P$ lie in the positive semispace. Then $f_0^+$ exists, and
$f_0^+=\textbf{I}_{P_0}-\sum_1^k \alpha_i \textbf{I}_{Q_i^+}$,
where $Q_i^+ = Q_i \cap \{x_1=0\}$ if not all the internal points
of $Q_i$ are lying in negative semispace, and $Q_i^+ =
\varnothing$ else. Likewise, $f_0^-=-\sum_1^k \alpha_i
\textbf{I}_{Q_i^-}$, where $Q_i^-$ are defined analogously.
Obviously $f_0^+=0$ and $f_0^-=0$ holds, because they are limits of
sequences which are equal to 0. We define $g=f_0^+-f_0^-$. It
follows that $g=0$ and $g=\textbf{I}_{P_0}-\sum_1^k \alpha_i
(\textbf{I}_{Q_i^+}-\textbf{I}_{Q_i^-})$. If the interior of $Q_i$
intersects $\{ x_1=0 \}$, then $Q_i^+=Q_i^-$, and the 
corresponding brackets in the sum are equal to 0. (At this point 
convexity is required.) If 
$Q_i$ just touches this hyperplane, then one of the members 
in the corresponding term in brackets is equal to 0. So
\[0=\textbf{I}_{P_0}-\sum_1^k \beta_i \textbf{I}_{S_i},\]
where $S_i=\varnothing$ if the interior of $Q_i$ intersects the
hyperplane, $S_i=Q_i^+$ and $\beta_i=\alpha_i$ if $Q_i$ just
touches the hyperplane and lies in the positive semispace, $S_i=Q_i^-$
and $\beta_i=-\alpha_i$ if it just touches the hyperplane and lies
in the negative semispace.

\begin{lemma}
If a B-type polytope $Q$ just touches a hyperplane $H$ through the
centre $x$ of a sphere, 
then the polytope $Q \cap H$ is also a B-type polytope.
\end{lemma}

\begin{proof} Any B-type polytope contains two ends of some diameter
of the $n$-sphere, say, points $k, \ell$. If $\overline{k \ell} \cap 
H =\{x\}$, then $H$ intersects the interior of the polytope $Q$.
This is impossible, since $Q$ and $H$ are just touching. 
Therefore $\overline{k \ell} \subset H$. Hence the
polytope $Q \cap H$ contains two ends of some diameter of the
sphere and is a B-type polytope.
\end{proof}

So all $S_i$ are B-type polytopes, and $P_0$ is an A-type
polytope. We have a contradiction with the proposition of the
induction. This completes the proof of Theorem \ref{lincomb}.
\end{proof}

\begin{thm} \label{sphere}
Any sphere $S$ in $\mathbb{R}^n$ cannot be partitioned in B-type
polytopes and exactly one A-type polytope.
\end{thm}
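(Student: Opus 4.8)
The plan is to reduce this statement directly to Theorem \ref{lincomb}. First I would observe that the whole sphere $S$ is itself a spherical polytope --- namely the intersection of $S$ with the empty family of halfspaces (or with one trivial halfspace containing everything) --- and that it is of B-type, since it contains every diameter of $S$, in particular the two ends of some diameter.

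Next, suppose for contradiction that $S$ is partitioned into an A-type polytope $P$ and B-type polytopes $Q_1,\ldots,Q_k$. Since such a partition is simultaneously a packing and a covering, the interiors of the pieces are pairwise disjoint and their union is all of $S$; hence the set of points covered more than once or not at all is contained in the union of the boundaries of finitely many polytopes, which has Lebesgue measure zero. Consequently $\textbf{I}_P+\sum_{i=1}^k \textbf{I}_{Q_i}=1=\textbf{I}_S$ in the sense used in this section (equality outside a null set).

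Rearranging, $\textbf{I}_P=\textbf{I}_S-\sum_{i=1}^k \textbf{I}_{Q_i}$, which exhibits the indicator of the A-type polytope $P$ as a finite linear combination --- with coefficients $1,-1,\ldots,-1$ --- of indicators of the B-type polytopes $S,Q_1,\ldots,Q_k$. This contradicts Theorem \ref{lincomb}, completing the proof.

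The argument has no genuinely hard step once Theorem \ref{lincomb} is in hand; the only two points that deserve a word of care are (i) verifying that the full sphere $S$ counts as a B-type polytope under the definition given above, and (ii) justifying that the indicators of the tiles of a partition add up to the constant function $1$ up to a set of measure zero --- this is where one uses that the partition consists of only finitely many polytopes, which is automatic for a locally finite tiling of the compact sphere $S$.
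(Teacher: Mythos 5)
Your proof is correct and follows essentially the same route as the paper: both express the constant function $1$ (almost everywhere) as a sum of indicators of B-type polytopes and then invoke Theorem \ref{lincomb}. The only cosmetic difference is that the paper writes $1=\textbf{I}_{M_1}+\textbf{I}_{M_2}$ for two hemispheres $M_1,M_2$ rather than treating the whole sphere $S$ as a single (degenerate, zero-halfspace) B-type polytope, which sidesteps any quibble about whether the definition of a spherical polytope admits the empty family of halfspaces.
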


\begin{proof} We assume there is such a decomposition. $P$ is 
an A-type polytope and $Q_1,\ldots,Q_k$ are B-type polytopes. 
Let $M_1$ and $M_2$ are two hemispheres such that 
$M_1 \cup M_2 = S$. Then
\[ \textbf{I}_P+\sum_1^k \textbf{I}_{Q_i} -\textbf{I}_{M_1}-\textbf{I}_{M_2}=0.\] 
This contradicts Theorem \ref{lincomb}.
\end{proof}

\begin{thm} \label{lvert}
Let $\T$ be a locally finite polytopal tiling in $\mathbb{R}^n$.
There is no point $x \in \mathbb{R}^n$ such that $x$ is a vertex
of exactly one polytope of $\T$.
\end{thm}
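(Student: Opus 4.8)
The plan is to reduce the statement about a vertex of a tiling in $\R^n$ to the statement about spherical polytopes in Theorem \ref{sphere}, by looking at a small sphere around the suspect point $x$. Suppose for contradiction that $x$ is a vertex of exactly one tile $T_0$ of $\T$. Since $\T$ is locally finite, only finitely many tiles $T_0, T_1, \ldots, T_k$ contain $x$. Choose $\eps > 0$ small enough that the closed ball $\B$ of radius $\eps$ around $x$ meets no tile other than $T_0, \ldots, T_k$ (possible by local finiteness) and, moreover, small enough that for each $T_i$ containing $x$ the ball $\B$ meets only those faces of $T_i$ that pass through $x$. Let $S = \partial\B$ be the sphere of radius $\eps$ about $x$.

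Next I would intersect everything with $S$. For each $i$, the set $Q_i := T_i \cap S$ is a spherical polytope: near $x$ the tile $T_i$ coincides with the intersection of the halfspaces bounding the faces through $x$, and intersecting that cone of halfspaces with $S$ gives a spherical polytope with centre $x$. Because $\T$ is a tiling (covering plus packing) and $\B$ meets no other tiles, the $Q_i$ cover $S$ and have pairwise disjoint interiors, so $\{Q_0, Q_1, \ldots, Q_k\}$ is a partition of $S$ in the sense used in Theorem \ref{sphere}. The key point is the A/B dichotomy: I claim $Q_0 = T_0 \cap S$ is an A-type polytope, while every $Q_i$ with $i \ge 1$ is a B-type polytope. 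For $i \ge 1$: since $x$ is \emph{not} a vertex of $T_i$, the smallest face of $T_i$ containing $x$ has dimension $\ge 1$, hence $T_i$ contains a whole segment through $x$ in its relative interior, and therefore $T_i \cap S$ contains both endpoints of a diameter of $S$ — that is, $Q_i$ is B-type. For $i = 0$: since $x$ \emph{is} a vertex of $T_0$, the tile $T_0$ contains no line segment having $x$ in its interior (a vertex is by definition a $0$-dimensional face, equivalently $T_0$ lies in an open halfspace determined by some supporting hyperplane at $x$ except for the point $x$ itself); hence $T_0 \cap S$ cannot contain two ends of a diameter, so $Q_0$ is A-type.

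Having established that $S$ is partitioned into the B-type polytopes $Q_1, \ldots, Q_k$ and the single A-type polytope $Q_0$, this directly contradicts Theorem \ref{sphere}. Therefore no such point $x$ exists, proving Theorem \ref{lvert}.

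The main obstacle I anticipate is the careful choice of $\eps$ and the verification that $T_i \cap S$ is genuinely a spherical polytope with centre $x$ and that the family $\{Q_i\}$ is genuinely a partition of $S$ — in other words, that locally around a point the tiling restricted to a small sphere behaves like a spherical tiling. This needs local finiteness in two places: to ensure finitely many tiles meet $\B$, and to ensure $\eps$ can be taken small enough that each relevant tile looks like its tangent cone at $x$ on the ball $\B$. The A/B classification itself is then essentially a restatement of what "vertex" means (a $0$-dimensional face versus a positive-dimensional one), so the only real work is packaging the local geometry correctly.
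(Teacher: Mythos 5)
Your proof is correct and follows essentially the same route as the paper: shrink to a sphere $S$ about $x$ small enough that every face of a tile meeting $S$ passes through $x$, observe that tiles having $x$ as a vertex cut out A-type spherical polytopes while the others cut out B-type ones, and invoke Theorem \ref{sphere}. The extra care you take with the choice of $\eps$ and with the A/B dichotomy (via convexity: two antipodal points of $T\cap S$ force a segment through $x$ inside $T$) is exactly the detail the paper leaves implicit.
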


\begin{proof} We choose a sphere $S$ with centre $x$ such that all
faces of the polytopes of $\T$ intersecting $S$ 
contain $x$. We can find such a sphere since $\T$ is locally
finite. If $x$ is a vertex of a tile $T$ in $\T$, then its
intersection with $S$ is an A-type polytope. If $x \in T$ is not a
vertex, then the intersection $T \cap S$ is a B-type polytope. 
Because of Theorem \ref{sphere} there can't be exactly one A-type 
polytope. So $x$ can't be a vertex of exactly one polytope of the
tiling $\T$.   
\end{proof}

{\bf Remark:} The last result generalizes immediately to spherical and
hyperbolic tilings: Even though no two of Euclidean space $\R^n$,
hyperbolic space $\H^n$ and spherical space ${\mathbb S}^n$ are
conformal to each other, they are {\em locally conformal}: There is a
map $f_x: \X \to \X'$ (where $\X, \X' \in \{ \R^n, \H^n, {\mathbb S}^n
\}$), such that, for a given point $x \in \X$, lines through $x$ are
mapped to lines through $f_x(x)$, and their orientations and the angles
between such lines are preserved. This is all we need to generalize
the result.

\begin{cor}
Each $k$-face of some tile in a locally finite $\T$ tiling of $\R^n$
by polytopes is covered by finitely many $k$-faces of some other
tiles. 
\end{cor}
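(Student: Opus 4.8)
\textit{Proof idea.} Let $F$ be a proper $k$-face of a tile $T=T_0$, so $k\le n-1$. Because $F$ is bounded, local finiteness ensures that only finitely many tiles meet $F$, hence only finitely many $k$-faces of tiles $\ne T$ can occur; and since each $k$-face is closed while $F$ is the closure of its relative interior, it suffices to show that \emph{every} $q\in\operatorname{relint}F$ lies in some $k$-face of a tile different from $T$. Taking the union of these (finitely many) $k$-faces and passing to the closure then covers $F$.

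Fix $q\in\operatorname{relint}F$ and let $L$ be the linear subspace parallel to $\operatorname{aff}F$, so $\dim L=k$. The tangent cones at $q$ of the finitely many tiles containing $q$ form a complete polyhedral fan of $\R^n$. The cone $\sigma:=\operatorname{Tan}(T,q)$ is full-dimensional and proper (since $q\in\partial T$), and because $q$ lies in the relative interior of the $k$-face $F$ its lineality space is exactly $L$; thus $\sigma=L\oplus C$ with $C\subset L^{\perp}$ a pointed cone. The plan is to fix a \emph{generic} $(n-k)$-dimensional subspace $M$ with $M\cap L=\{0\}$ and to work with the fan cut by $M$. Then $\sigma\cap M$ is pointed — if $\pm v\in(L\oplus C)\cap M$, projecting to $L^{\perp}$ makes the $C$-components cancel, hence vanish, so $v\in L\cap M=\{0\}$ — and therefore $q$ is a vertex of $\sigma\cap M$; moreover, for generic $M$ the traces $\{\operatorname{Tan}(T_i,q)\cap M\}$ again form a complete polyhedral fan of $M\cong\R^{n-k}$.

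Intersect everything with a small sphere $S$ centred at $q$ inside the flat $q+M$. Every face of every cone $\operatorname{Tan}(T_i,q)\cap M$ contains $q$, so $S$ is partitioned into spherical polytopes, and the one coming from $T$, namely $(\sigma\cap M)\cap S$, is of A-type, since the pointed cone $\sigma\cap M$ contains no line through $q$ and hence no two ends of a diameter of $S$. By Theorem~\ref{sphere} such a partition cannot contain exactly one A-type polytope, so there is a tile $T'\ne T$ with $q\in T'$ for which $(\operatorname{Tan}(T',q)\cap M)\cap S$ is of A-type as well; equivalently, $\operatorname{Tan}(T',q)\cap M$ is a pointed cone. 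Consequently the lineality space of $\operatorname{Tan}(T',q)$ meets the $(n-k)$-dimensional subspace $M$ only in $\{0\}$ and so has dimension at most $k$. This dimension equals that of the smallest face $G$ of $T'$ containing $q$ (the directions in which one can move from $q$ inside $T'$ in both senses are exactly those remaining in $G$). Since $\dim G\le k\le n$ and the faces of $T'$ containing $G$ realize every dimension from $\dim G$ to $n$, the tile $T'$ has a $k$-face containing $G$, and in particular containing $q$; this is the desired $k$-face.

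The one place requiring genuine care is the transversal slicing: one must check that a sufficiently generic $M$ turns the traces of the tangent-cone fan into a bona fide locally finite polytopal partition, so that Theorem~\ref{sphere} — equivalently, the purely local argument behind Theorem~\ref{lvert} — can be invoked in dimension $n-k$. The auxiliary facts used along the way (the tangent cones of the tiles through a point forming a complete fan; the dimension of the smallest face containing a point equalling the lineality dimension of the tangent cone there; the existence of a $k$-face above any face of dimension at most $k$) are standard properties of convex polytopes.
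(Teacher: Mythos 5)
Your argument is correct in substance but follows a genuinely different route from the paper's. The paper inducts on $k$: it passes to the spherical tiling $\T \cap S$ around a point $x$ in the relative interior of $F$, where the $k$-face becomes a $(k-1)$-face of $T\cap S$, and iterates down to the vertex case (Theorem \ref{lvert}); the cones $\conv(x,F_i)$ then lift the spherical covering back to $\R^n$. You instead perform the whole reduction in one step: you slice the tangent-cone decomposition at $q$ by a generic $(n-k)$-plane $M$ complementary to the lineality space $L$ of $\operatorname{Tan}(T,q)$, so that $F$ contributes a pointed cone, hence an A-type polytope in the link sphere $\S^{n-k-1}$; Theorem \ref{sphere} then forces a second A-type trace, i.e.\ a second tile $T'$ whose minimal face at $q$ has dimension at most $k$ and therefore lies under a $k$-face of $T'$. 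This buys a proof with no induction on $k$ (Theorem \ref{sphere} is invoked once, in dimension $n-k$) and makes the lineality-space mechanism explicit; the paper's version avoids any transversality discussion but tacitly applies its Euclidean induction hypothesis to a spherical tiling, which it never justifies beyond the remark following Theorem \ref{lvert}.

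The one real debt in your write-up is the step you yourself flag: you must actually verify that a generic $M$ with $M \cap L = \{0\}$ turns the traces $\operatorname{Tan}(T_i,q)\cap M$ into a partition of $M$ (equivalently of $\S^{n-k-1}$) into cones with pairwise disjoint relative interiors. Without genericity this can fail: $M$ may meet a tangent cone only inside one of its facets, and then its trace overlaps the neighbouring trace on a set that is full-dimensional in $M$. The standard fix is to demand that $M$ be transversal to the linear span of each of the finitely many proper faces of the tangent cones, which forces every full-dimensional trace to meet the interior of its cone; this is an open dense condition, so such an $M$ exists, but as written the claim is asserted rather than proved. The remaining ingredients (covering of $M$ by the traces; the lineality dimension of $\operatorname{Tan}(T',q)$ equals the dimension of the minimal face of $T'$ containing $q$; every dimension between $\dim G$ and $n$ is realized by a face of $T'$ above $G$; closedness of a finite union of $k$-faces, guaranteed by local finiteness, promotes the covering of $\operatorname{relint}F$ to one of $F$) are correctly identified and are indeed standard.
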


\begin{proof}
We use induction on $k$. The case $k=0$ is Theorem \ref{lvert}: 
Any vertex is covered by a vertex of some other tile.

Let the statement be true for $k-1$. Let $F$ be a $k$-face of 
some tile $T \in \T$. Let $x$ be a point in the relative interior 
of $F$. As above, let $S$ be a sphere with centre $x$
such that 

\begin{center}
(A) \ All faces of polytopes in $\T$ intersecting $S$ contain $x$. 
\end{center}

Since $F$ is a $k$-face, $F' = F \cap S$ is a $(k-1)$-face 
of $T \cap S$ (in the spherical tiling $\T \cap S$). 
By the proposition of induction, $F'$ is covered by $(k-1)$-faces $F_i$. 
Because of (A), the convex hull $\conv(x,F')$ of $x$ and $F'$ in
$\R^n$ is covered by $\conv(x,F_i)$, which are subsets of $k$-faces in
$\T$. This is true for any $x$ in the relative interior of
$F$, thus everywhere. Because of local finiteness, $F$ is covered
by finitely many $k$-faces.
\end{proof}

The following theorem is used in the next section.

\begin{thm} \label{graph}
Given a polytopal tiling $\T$, let $G=(V,E)$ be the following undirected 
graph: $V$ is the set of all vertices of tiles in $\T$. Vertices are
identified if they are equal as elements of $\R^n$. $E$ is the set of
edges in $G$, where $(x,y) \in E$ iff the line segment $\overline{xy}$
is an entire edge of some tile in $\T$. 
Then, all connected components of $G$ are infinite.
\end{thm}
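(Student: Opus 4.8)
The plan is to argue by contradiction, reducing the statement to a purely spherical fact (a cousin of Theorem \ref{sphere}): that the vertices of a polytopal tiling of a sphere cannot all lie in a single open hemisphere unless the tiling has no vertices at all.

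So suppose $G$ has a nonempty finite connected component $C$. The first move is a reduction. If a tile $T\in\T$ has a vertex in $C$, then in fact \emph{all} vertices of $T$ lie in $C$: the graph of the polytope $T$ is connected (see \cite{z}), and every edge of $T$ is an edge of $G$, so starting from a vertex of $T$ in $C$ and walking along the edges of $T$ one never leaves $C$. Consequently the family $\T_C$ of tiles having a vertex in $C$ is finite (here one uses that $\T$ is locally finite, as everywhere in the paper, together with $|C|<\infty$), the vertex set of $\T_C$ is exactly $C$, and each $T\in\T_C$ is the convex hull of its vertices, hence $T\subseteq\conv(C)$.

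Now pick a vertex $v$ of the polytope $P:=\conv(C)$ and a linear functional $\phi$ on $\R^n$ with $\phi(v)>\phi(x)$ for all $x\in P\setminus\{v\}$. Since every $G$-neighbour of $v$ lies in $C\subseteq P$, every edge of $\T$ incident to $v$ runs into the open halfspace $\{\phi<\phi(v)\}$. As in the proof of Theorem \ref{lvert}, choose a sphere $S$ centred at $v$ so small that every face of every tile meeting $S$ contains $v$; then the tiles containing $v$ cut $S$ into a locally finite polytopal tiling $\T\cap S$ of $S=\S^{n-1}$, whose vertices are precisely the points $S\cap e$ as $e$ runs over the edges of tiles of $\T$ incident to $v$. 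Hence every vertex of $\T\cap S$ lies in the open hemisphere $S^-:=\{s\in S:\langle s,\nabla\phi\rangle<0\}$, and $\T\cap S$ does have vertices, because $v\in V$ forces at least one tile of $\T\cap S$ to be an A-type polytope and an A-type polytope has vertices.

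It remains to rule this configuration out, and here the A-type/B-type dichotomy enters. Let $\Pi$ be the tile of $\T\cap S$ containing the ``north pole'' $\nabla\phi/|\nabla\phi|$. If $\Pi$ is an A-type polytope we finish immediately: an A-type spherical polytope lies in an open hemisphere and therefore equals the spherical convex hull of its finitely many vertices, so, all vertices of $\T\cap S$ lying in the spherically convex set $S^-$, we would get $\Pi\subseteq S^-$, contradicting that the north pole lies in $\Pi$ but not in $S^-$. Thus $\Pi$ must be a B-type polytope; in fact this is forced directly, since $v$ cannot be a vertex of the tile $T$ with $\Pi=T\cap S$ (otherwise $T\subseteq\conv(C)\subseteq\{\phi\le\phi(v)\}$, whereas $T$ contains points $v+t\nabla\phi$ with $\phi>\phi(v)$ for small $t>0$), and $v$ not a vertex of $T$ means $T\cap S$ is B-type. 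So the whole difficulty is concentrated in the B-type case, i.e.\ in the purely spherical claim: \emph{a locally finite polytopal tiling of $\S^{n-1}$ possessing at least one vertex cannot have all of its vertices inside one open hemisphere.} I would prove this by induction on $n$, by a slicing argument parallel to the proof of Theorem \ref{lincomb}: intersect with a hyperplane through the centre, track which faces cross it transversally and which just touch it (invoking the lemma that a B-type polytope just touching such a hyperplane meets it in a B-type polytope), and verify that the ``all vertices in an open hemisphere'' hypothesis descends to a lower-dimensional instance forbidden by the inductive hypothesis; the base case $n=1$ is immediate, since the only polytopal tiling of $\S^0=\{-1,1\}$ with a vertex is $\{\{-1\},\{1\}\}$, whose two vertices do not share an open hemisphere. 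Making this descent precise --- in particular identifying exactly the right statement to induct on so that the hemisphere hypothesis is preserved under slicing --- is the step I expect to be the main obstacle.
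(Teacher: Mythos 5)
Your setup coincides with the paper's: assume a finite component $C$, note that a tile with one vertex in $C$ has all its vertices in $C$, pass to the finite family $\T_C$ of such tiles, and examine the spherical tiling around an extreme vertex $v$ of $\conv(C)$. But the proposal does not close the argument: you reduce everything to the statement ``a polytopal tiling of $\S^{n-1}$ with at least one vertex cannot have all its vertices in one open hemisphere'' and then explicitly defer its proof (``the step I expect to be the main obstacle''). That deferred step is where the content of the theorem lives, so as it stands this is not a proof. The paper instead finishes in three lines by invoking the already-established Theorem \ref{sphere}: a tile contributes an A-type polytope at $v$ if and only if $v$ is one of its vertices, any such tile belongs to the finite family, and Theorem \ref{sphere} produces a second A-type polytope whose tile is then argued to lie both inside and outside that family.

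Moreover, the reduction to your hemisphere lemma is itself flawed. The vertices of the spherical tiling $\T\cap S$ are \emph{not} only the points $S\cap e$ for edges $e$ having $v$ as an endpoint. If $v$ lies in the relative interior of an edge $e$ of a tile $T$ (so $v$ is not a vertex of $T$ and $T\cap S$ is B-type), then $T\cap S$ has the two antipodal points of $e\cap S$ as vertices --- compare the analysis of B-type polytopes in the paper's final theorem, where a B-type polytope with a vertex $x$ always has $-x$ as a vertex too --- and at most one point of an antipodal pair can lie in the open hemisphere $S^-$. Nothing prevents a tile outside $\T_C$ from having an edge through $v$ (already in the plane: a square of $\T_C$ with corner at $v$ abutting a long rectangle whose side contains $v$ in its interior), so ``all vertices of $\T\cap S$ lie in $S^-$'' is not a consequence of your construction, and the lemma you propose to induct on is not the one your configuration satisfies. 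What the construction does give is that every \emph{A-type} tile of $\T\cap S$ is contained in $S^-$; the statement you actually need is therefore that $\S^{n-1}$ cannot be partitioned into B-type polytopes together with a nonzero number of A-type polytopes all lying in one open hemisphere, which is the role Theorem \ref{sphere} (via Theorem \ref{lincomb}) plays in the paper. You should rebuild the final step around the A-type/B-type indicator machinery rather than around the location of the vertices of $\T\cap S$.
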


\begin{proof} 
Obviously, any two vertices of some polytopal tile $T$ are connected
by a finite path of edges of $T$, so they are in the same 
connected component of $G$. Therefore, each tile belongs either 
entirely to a connected component of $G$ or not.

Assume there is a finite connected component $C$ in $G$. Let $\F$ be
the set of all tiles belonging to $C$. Being finite, the union
$\supp(\F)$ (which is a polytope, though not necessarily convex) 
has some outer vertex $x$. 

The vertex $x$ corresponds to an A-type polytope as above. By
Theorem \ref{sphere}, there is at least one further A-type polytope, 
belonging to a tile $T \notin \F$. Because $T$ contributes an $A$-type 
polytope, $x$ is a vertex of $T$. This contradicts $T \notin \F$,
proving the claim. 
\end{proof}

\section{Application to substitution tilings} \label{apps}

The discovery of nonperiodic structures with long range
order (for instance, Penrose tilings and quasicrystals) had a large
impact to many fields in mathematics, see for instance \cite{lag}. 
Tile-substitutions are a simple and powerful tool to generate 
interesting nonperiodic
structures with long range order, namely: substitution tilings. The
basic idea is to give a finite set of {\em prototiles}
$T_1, \ldots, T_m$, together with a rule how to enlarge each prototile
by a common {\em inflation factor} $\lambda$ and then dissect it into
--- or more general, replace it by --- copies of the
original prototiles. Figure \ref{substbsp} shows some examples of
substitution rules. Note, that a substitution $\sigma$ maps tiles to
finite sets of tiles, finite sets of tiles to (larger) finite sets of
tiles, and tilings to tilings. 
By iterating the substitution rule, increasingly
larger portions of space are filled, yielding a tiling of the entire
space in the limit. For a more precise definition of substitution
tilings, see for instance \cite{fr2}. For a collection of substitution
tilings, and a glossary of related terminology, see \cite{frh}.

\begin{figure}
\includegraphics{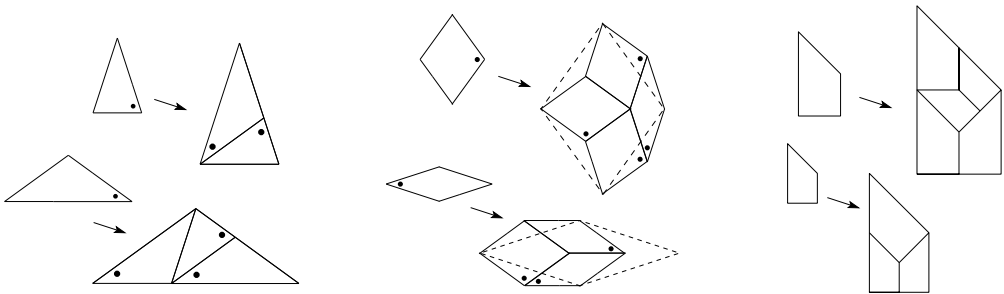}
\caption{\label{substbsp} Three examples of tile-substitutions: The
  Penrose substitution rule for triangular tiles (left), the
  substitution rule for binary tilings (centre), the semi-detached
  house substitution rule (right).} 
\end{figure}

A tile-substitution rule with a proper dissection, that is, where
\begin{equation} \label{selfsim}
 \lambda T_i = \bigcup_{T \in \sigma(T_i)} T \qquad  (1 \le i \le m)
\end{equation}
(where the union is non-overlapping) is called {\em self-similar} 
tile-substitution. If \eqref{selfsim} does
not hold, as in Figure \ref{substbsp} (centre), one may still speak of
a substitution tiling, but not of a self-similar tiling.

The following definition turned out to be useful in the theory of
nonperiodic tilings. It rules out certain pathological cases and
is consistent with other concepts within this theory, for instance the
{\em tiling space}, or the {\em hull} of a tiling \cite{sol}, \cite{kp}.  

\begin{defi}
Let $\sigma$ be a tile-substitution with prototiles $T_1, \ldots,
T_m$. The sets $\sigma^k(T_i)$ are called {\em ($k$-th order) 
supertiles}.\\ A tiling $\T$ is called {\em substitution tiling} (with
tile-substitution $\sigma$) if for each finite subset $\F \subset \T$
there are $i,k$ such that $\F$ is congruent to a subset of some
supertile $\sigma^k(T_i)$. \\
The family of all substitution tilings with tile-substitution $\sigma$
is denoted by $\X_{\sigma}$. 
\end{defi}

Many results in the theory of substitution tilings require the tilings
under consideration to be of finite local complexity, compare
for instance \cite{sol}, \cite{sol2}, \cite{lms}.

\begin{defi}
A tiling $\T$ has {\em finite local complexity (FLC)} if for each
$r>0$ there are only finitely many different constellations of
diameter less than $r$ in $\T$, up to translation. 
\end{defi}
Usually, if a
certain substitution tiling has FLC, this is easy to see. For
instance, each vertex-to-vertex tiling with finitely many prototiles
has FLC. More general, the following condition is frequently used
\cite{fr}.  

\begin{lemma} \label{flclemma}
A tiling is FLC iff there are only finitely many different
constellations of two intersecting tiles, up to translation. 
\end{lemma}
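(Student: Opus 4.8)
The plan is to prove the two implications separately; ``$\Rightarrow$'' is routine and ``$\Leftarrow$'' carries all the content. For ``$\Rightarrow$'', assume $\T$ is FLC. Since there are finitely many prototiles, the tiles lie in finitely many congruence classes, so their diameters are bounded by some $D>0$; hence any constellation $\{T,T'\}$ consisting of two intersecting tiles has diameter at most $2D$. Applying the definition of FLC with $r=2D+1$ then immediately yields that there are only finitely many such constellations up to translation.

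For ``$\Leftarrow$'', assume there are only finitely many constellations of two intersecting tiles up to translation. I first record two preliminary facts. The tiles have diameters bounded by some $D$ and volumes bounded below by some $v_0>0$ --- in the present context this holds because there are finitely many prototiles, but it also follows directly from the hypothesis, since $\T$ covers $\R^n$ by bounded tiles, so every tile lies in some two-tile constellation and there are therefore only finitely many tiles up to translation. Consequently a closed ball of radius $\rho$ meets at most $N(\rho):=\mathrm{vol}(B_{\rho+D})/v_0$ tiles, where $B_{\rho+D}$ denotes a ball of radius $\rho+D$; in particular $\T$ is locally finite.

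Now fix $r>0$ and let $\mathcal{C}$ be a constellation of diameter less than $r$. Choose a tile $T_1\in\mathcal{C}$ and a point $x\in T_1$; then every tile of $\mathcal{C}$ meets the ball of radius $r$ about $x$, so $\mathcal{C}\subseteq X$, where $X$ is the set of all tiles meeting that ball, and $|X|\le N(r)$ by the above. The key point is that the adjacency graph of $X$ --- tiles joined by an edge when they intersect --- is connected: its union $\bigcup_{T\in X}T$ contains the connected ball and is a union of connected tiles each meeting that ball, hence is connected, whereas a disconnection of the adjacency graph would split this union into two disjoint nonempty closed sets. Fix a spanning tree of the adjacency graph rooted at $T_1$ and traverse it: each step passes from an already-placed tile $S$ to a neighbour $S'$, and the unordered pair $\{S,S'\}$ is one of the finitely many two-tile constellations up to translation; since the position of $S$ is already fixed, this pins down the position and shape of $S'$ up to finitely many choices. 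As $|X|\le N(r)$, iterating shows that $X$, and hence its subset $\mathcal{C}$, is determined up to translation by a bounded amount of finite data. Thus there are only finitely many constellations of diameter less than $r$ up to translation, i.e.\ $\T$ is FLC.

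The step I expect to be the main obstacle is controlling the relative position of two tiles of a constellation that do \emph{not} touch: the hypothesis constrains only intersecting pairs, so one must join any two tiles of $\mathcal{C}$ by a chain of pairwise intersecting tiles of bounded length and propagate the finite ambiguity along it. This is exactly where the preliminary facts are used --- they bound the number $N(r)$ of tiles that can occur in the relevant region, so the chain has bounded length, and the connectedness of the adjacency graph of $X$ guarantees that such a chain exists.
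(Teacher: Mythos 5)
The paper states this lemma without proof, citing it as a standard fact from \cite{fr}, so there is no in-text argument to compare yours against; judged on its own, your proof is correct and is the expected chain-of-intersecting-tiles argument. The three ingredients all check out: the uniform diameter bound $D$ and lower volume bound $v_0$ give the bound $N(r)$ on the number of tiles meeting a ball of radius $r$ (hence local finiteness); the adjacency graph of the patch $X$ of all tiles meeting that ball is connected, because $\supp(X)$ contains the connected ball, is a union of finitely many connected closed tiles each meeting that ball, and a disconnection of the graph would split $\supp(X)$ into two disjoint nonempty closed pieces; and propagating the finitely many translation classes of intersecting pairs along a spanning tree determines $X$, and hence any constellation $\mathcal{C}\subseteq X$, up to translation and finitely many choices. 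Embedding an arbitrary (possibly disconnected) constellation of diameter $<r$ into the connected patch $X$ is exactly the right way to handle non-touching tiles.

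Two small caveats. First, the lemma is literally false without the standing (implicit) assumption that the tiles fall into finitely many translation classes: a tiling of $\R$ by intervals of strictly increasing lengths satisfies the paper's definition of FLC (for each $r$ only finitely many tiles even have diameter less than $r$, so there are only finitely many constellations of diameter less than $r$ outright), yet it has infinitely many translation classes of adjacent pairs. You rightly invoke finitely many prototiles for the forward direction. Second, your parenthetical claim that finiteness of the tile set up to translation \emph{follows} from the hypothesis, because every tile lies in a two-tile constellation, has a gap in full generality: before local finiteness is established, a tile in a covering-and-packing can fail to meet any other tile (e.g.\ $[0,1]$ followed by the intervals $[1+\tfrac{1}{k+1},\,1+\tfrac{1}{k}]$, $k\ge 1$, whose union is $(1,2]$). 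Since you offer this only as an alternative to the prototile assumption, it does not damage the proof, but you should drop or qualify that aside.
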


On the other hand, if a tiling does not 
have FLC, this can be hard to prove, see \cite{dan,f-r}. The
following theorem covers a broad class of substitution tilings where
the inflation factor $\lambda$ is an integer number. An example of
such a tile-substitution is shown in Figure \ref{substbsp} (right),
where the inflation factor is 2. 
A weaker version of this theorem was proved in \cite{fr}, and it was
realized that a negative answer to Question 1 would yield a stronger
result. Thus Question 1 was stated in \cite{fr} as an open problem.

\begin{thm} \label{flc}
Let $\T$ be a self-similar substitution tiling with polytopal
prototiles and integer inflation factor. Without loss of
generality, let 0 be a vertex of each prototile. If the \Z-span of all
vertices of the prototiles is a discrete lattice, then $\T$ is of
finite local complexity. 
\end{thm}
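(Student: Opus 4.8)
The plan is to show that all vertices of all tiles of $\T$ lie in a fixed discrete lattice $\Gamma$, and then invoke Lemma \ref{flclemma}. Let $\lambda \in \Z$ be the inflation factor and let $\Gamma$ be the $\Z$-span of all vertices of all prototiles; by hypothesis $\Gamma$ is a discrete lattice (after translating so that $0$ is a vertex of each prototile, as stated). First I would check the easy inclusion: every tile of $\T$ is, up to congruence, a prototile, but we need more — we need to control the actual positions. The key structural fact is that each tile $T$ of $\T$ sits inside a supertile $\sigma^k(T_i)$ for suitable $i,k$, and inside a supertile the vertex positions are governed by the recursion \eqref{selfsim}: a vertex of a tile in $\sigma^{k}(T_i)$ is obtained from a vertex of a tile in $\sigma^{k-1}(T_j)$ by the substitution map, which acts on the blown-up copy $\lambda T_j$. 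Since $\lambda$ is an integer and the relative placements of the children inside $\lambda T_i$ are prescribed by finitely many vectors (the translation parts of $\sigma$), and those vectors are themselves differences/sums of prototile vertices hence lie in $\Gamma$, an induction on $k$ shows every vertex of every tile in $\sigma^k(T_i)$ lies in $\Gamma$ (using $\lambda \Gamma \subseteq \Gamma$, which holds because $\lambda \in \Z$ and $\Gamma$ is a $\Z$-module).

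Here, though, is where I expect the real obstacle, and where Theorem \ref{lvert} (equivalently Theorem \ref{graph}) enters. The substitution is only assumed to be a \emph{rule}, not an orientation-preserving one, so children inside $\lambda T_i$ may be rotated or reflected copies of prototiles; then the translation vectors alone do not obviously lie in $\Gamma$, and worse, a tiling $\T \in \X_\sigma$ is glued together from supertiles of unbounded order with no a priori common reference point. The fix is: fix one tile $T^{*} \in \T$ and declare its vertices to be lattice points — i.e. translate so that one vertex of $T^{*}$ is $0$; this is legitimate because $0$ is a vertex of every prototile and $T^{*}$ is congruent to a prototile. Now I would argue by a connectedness/propagation argument on the edge graph $G=(V,E)$ of Theorem \ref{graph}: if $(x,y) \in E$ then $\overline{xy}$ is a full edge of some tile, and since that tile is congruent to a prototile whose edge vectors all lie in $\Gamma$ — wait, congruence does not preserve $\Gamma$ unless the congruence is in the symmetry group of $\Gamma$. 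So instead the induction must be run on supertile order directly: the whole tiling $\T$ is an increasing union of patches each congruent to $\sigma^{k}(T_i)$, but \emph{crystallographic restriction} is not available in general. The cleanest route is therefore to combine the corollary above (each $k$-face is covered by finitely many $k$-faces of other tiles) with Theorem \ref{graph}: the edge graph is connected within each tile and has only infinite components, so starting from $T^{*}$ and walking along edges one reaches every tile of $\T$; at each step one crosses from a tile all of whose vertices are known to be in $\Gamma$ into an adjacent tile sharing at least one full edge, hence sharing at least two vertices in $\Gamma$, and then the remaining vertices of the new tile are determined by adding prototile edge-vectors — which lie in $\Gamma$ — provided the new tile's edge-vectors relative to the shared edge are in $\Gamma$.

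To make that last clause hold one does need the lattice to be compatible with the substitution: this is exactly the role of the integer factor. The argument I would actually write is the cleaner global one: by the definition of substitution tiling, for each $r$ every constellation of diameter $<r$ in $\T$ appears inside some supertile $\sigma^k(T_i)$; so it suffices to bound, uniformly in $k$ and $i$, the number of translation-classes of pairs of intersecting tiles occurring in $\sigma^k(T_i)$. By induction on $k$: in $\sigma^{k}(T_i) = \bigcup_{T \in \sigma(T_i)} \sigma^{k-1}(\cdot)$ a pair of intersecting tiles either lies in one of the finitely many blocks $\sigma^{k-1}(T_j)$ (finitely many classes by induction, and there are finitely many blocks placed by finitely many isometries whose translation parts lie in $\tfrac1{?}\Gamma$... ) — and the integrality of $\lambda$ together with $\Gamma$ being a lattice guarantees that after rescaling by $\lambda^{-k}$ everything still lands in $\Gamma$ because $\lambda^{k}\Gamma \subseteq \Gamma$ means the de-scaled positions are in $\lambda^{-k}\Gamma$, but we rescale the whole supertile back down by $\lambda^{-k}$ so positions return to $\Gamma$ — or it straddles two blocks, in which case the two tiles meet along a common face, and by the Corollary that face is a full common face up to finitely many pieces; combined with Theorem \ref{lvert} (no lonely vertex, so the local picture at any vertex is forced) this again gives finitely many classes. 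Summing over the finitely many block-placements and the finitely many boundary configurations yields a bound independent of $k$, so $\T$ has only finitely many translation-classes of intersecting pairs, and Lemma \ref{flclemma} finishes the proof. The main obstacle, as flagged, is handling the non-translational part of $\sigma$; the integer-factor hypothesis plus the lattice hypothesis are precisely what tame it, since $\lambda \Gamma \subseteq \Gamma$ lets the recursion close up, while Theorems \ref{lvert} and \ref{graph} (via the Corollary) control what happens across supertile boundaries, which is the part a naive "just look inside one supertile" argument misses.
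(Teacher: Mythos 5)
Your high-level strategy is the paper's: show every vertex of every tile of $\T$ lies in the lattice $\Gamma$ spanned by the prototile vertices, note that $\lambda\in\Z$ forces the corners of each supertile $\sigma^k(T_i)$ into $\Gamma$, and finish with Lemma \ref{flclemma}. But the crucial middle step --- getting from ``the corners of the supertile are in $\Gamma$'' to ``every tile-vertex inside the supertile is in $\Gamma$'' --- is where your argument breaks. You assert that because the edge graph $G$ of Theorem \ref{graph} has only infinite components, ``starting from $T^{*}$ and walking along edges one reaches every tile of $\T$.'' That is a non sequitur: Theorem \ref{graph} says each component is infinite, not that there is only one component, so the propagation from a single anchored tile does not cover the tiling. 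Your fallback, the counting argument in the last paragraph, is not a proof --- it contains unresolved placeholders and never closes the recursion across supertile boundaries.

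What the paper actually does with Theorem \ref{graph} is more local and is the idea you are missing. Fix a supertile $S=\sigma^k(T_i)$ and argue by induction on the dimension of its faces that all tile-vertices contained in $S$ lie in one component $C$ of $G$: vertices on a super-edge are chained together because a super-edge is a union of entire tile edges; for a $k$-face $F$ whose boundary vertices are already in $C$, a vertex of $F$ not in $C$ would lie in a \emph{finite} component of the edge graph of the (finite) tiling of $F$, and extending $F$ to a polytopal tiling of a $k$-dimensional space would exhibit a finite component, contradicting Theorem \ref{graph}. Once all vertices of $S$ are connected by paths of entire tile edges to a corner of $S$ (which lies in $\lambda^k\Gamma\subseteq\Gamma$), and each edge vector lies in $\Gamma$ by the hypothesis on the prototiles, every vertex in $S$ lies in $\Gamma$; since every finite patch of $\T$ sits in some supertile, all vertices of $\T$ do. Your worry about rotated or reflected copies of prototiles is a fair observation about an implicit convention (the paper tacitly treats the edge vectors of all tiles occurring in the tiling as differences of prototile vertices), but flagging it does not substitute for the face-dimension induction, which is the step that makes the whole proof work.
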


It is remarkable that a requirement on the shape of the prototiles,
without any word about the tile-substitution itself, suffices to
guarantee FLC. Note, that we do not require the tiles to be convex at
this point. It suffices that they are unions of finitely many convex
polytopes. 

\begin{proof}
We begin by showing that all vertices contained in some supertile 
$S = \sigma^k(T_i) = \{T, T', T'', \ldots \}$ 
belong to the same connected component of the graph $G$, 
with $G$ as in Theorem \ref{graph}. First we consider vertices on the
edge of the support of a supertile. A (super-)edge of the supertile $S$
consists of entire edges of some tiles. Thus, all vertices in a
single (super-)edge of $S$ belong to the same component $C$ of
$G$. Consequently, all vertices in the union of the 
edges of the supertile $S$ belong to $C$. 

Now, consider a $k$-face $F$ of $S$, where $k\ge 2$. Let 
all vertices on the boundary of $F$ (of dimension $k-1$) be in the
same component $C$ of $G$. If there is a vertex $x$ in $F$ with 
$x \notin C$, it belongs to a finite component of $G$ in $F$ which is
disjoint with the boundary of $F$. Thus, $F$ can be extended to a
$k$-dimensional polytopal tiling with the finite component $C$ in the
corresponding graph $G$. But this contradicts Theorem \ref{graph}.  
Consequently, all vertices in $F$ belong to $C$. Inductively --- by
finite induction on $k$ --- all vertices contained in the supertile 
$S$ belong to $C$.  

Now, let $\Gamma$ be the lattice spanned by the vertices of the
prototiles. Since the inflation factor is an integer, the vertices
of each supertile $S$ are elements of $\Gamma$. All tile-vertices
contained in $S$ belong to the same connected component of $G$, thus
 --- by definition of $G$ --- they are connected by a finite path of
entire tile edges $\overline{xy}$ with some vertex of $S$. By the
condition in the theorem, $x-y \in \Gamma$ for all such edges
$\overline{xy}$. Therefore, all vertices in the supertile are
contained in $\Gamma$. Consequently, all  vertices of $\T$ are
elements of $\Gamma$.  

In particular, if two tiles in $\T$ have nonempty intersection, there
is only a finite number of possible position of the vertices of these
tiles, by the discreteness of $\Gamma$. By Lemma \ref{flclemma}, $\T$
has FLC.
\end{proof}

\section{Remarks}

We have established the impossibility of a lonely vertex in a locally 
finite polytopal tiling in Euclidean, spherical and hyperbolic space of 
any dimension. Some consequences are discussed in this paper.
Naturally, further questions arise. For instance, what can be said
about lonely vertices in locally finite tilings with non-convex tiles?

Another natural question is: What can be said about exactly two 
vertices? Since a lonely vertex is impossible, there may be 
restrictions for constellations around a point which is a vertex 
of exactly two tiles $T, T'$. Indeed, one obtains the following result.
Roughly spoken, it means that edges of $T$ and $T'$ either are
coincident or opposite. In particular, the number of edges of
$T$ containing $x$ equals the number of edges of
$T'$ containing $x$. For clarity, we state the result in terms of
A-type and B-type polytopes.

\begin{thm}
Let a locally finite tiling of the unit sphere $\S^n$ by polytopes 
contain exactly two A-type polytopes $P, P'$. Let $x$ be a 
vertex of $P$. Then either $x$ or $-x$ is a vertex of $P'$.
\end{thm}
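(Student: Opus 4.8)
The plan is to argue by contradiction, mimicking the structure used to prove Theorem~\ref{sphere} and Theorem~\ref{lincomb}, but now tracking two A-type polytopes instead of one. Suppose the tiling of $\S^n$ contains exactly the A-type polytopes $P, P'$ together with B-type polytopes $Q_1,\ldots,Q_k$, and suppose that $x$ is a vertex of $P$ while neither $x$ nor $-x$ is a vertex of $P'$. Writing $M_1, M_2$ for two hemispheres with $M_1 \cup M_2 = \S^n$ and translating the tiling identity into indicators, we get
\[
\textbf{I}_P + \textbf{I}_{P'} + \sum_1^k \textbf{I}_{Q_i} - \textbf{I}_{M_1} - \textbf{I}_{M_2} = 0,
\]
so $\textbf{I}_P = -\textbf{I}_{P'} + \sum \beta_i \textbf{I}_{R_i}$ where all $R_i$ (the $Q_i$ and the hemispheres $M_j$) are B-type. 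By itself this is not yet a contradiction with Theorem~\ref{lincomb}, since $-\textbf{I}_{P'}$ is present; the point of the whole argument is that the hypothesis on $x$ lets us \emph{localize} near $x$ and eliminate $P'$.

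The key step is to pass to a sphere of lesser dimension by the same limiting device used in the induction step of Theorem~\ref{lincomb}. Choose a small sphere $S$ centered at $x$ so that every face of every polytope in the decomposition that meets $S$ passes through $x$; intersecting with $S$ gives a spherical tiling of $S \cong \S^{n-1}$. Because $x$ is a vertex of $P$, the trace $P \cap S$ is an A-type polytope. Because $x$ is \emph{not} a vertex of $P'$: either $x \notin P'$, in which case $P' \cap S$ does not occur at all; or $x$ lies in the relative interior of some face of $P'$, in which case $P' \cap S$ is a polytope that contains a pair of antipodal points of $S$ — i.e.\ a B-type polytope. (Here one must also check that the condition "$-x$ is not a vertex of $P'$" is not actually needed for this local trace; $-x$ plays no role in a sphere centered at $x$, so the statement's disjunction must be entering through a global, not local, argument — see below.) By the lemmas from the proof of Theorem~\ref{lincomb}, each B-type polytope meeting $S$ through $x$ traces to a B-type polytope of $S$, and each hemisphere traces either to a hemisphere of $S$ (still B-type) or is absorbed. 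Thus in the spherical tiling $\T \cap S$ the polytope $P \cap S$ is an A-type polytope, and every other tile meeting it is B-type — unless $P' \cap S$ is also A-type, which happens exactly when $x$ is a vertex of $P'$, contradicting our assumption. So $\T \cap S$ has exactly one A-type polytope, contradicting Theorem~\ref{sphere}.

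The main obstacle is reconciling this purely local argument (which seems to show $x$ is always a vertex of $P'$) with the weaker conclusion actually claimed (that $x$ \emph{or} $-x$ is a vertex of $P'$). The resolution is that in the limiting process one does not merely take a single small sphere $S$ around $x$: passing to lower dimension in Theorem~\ref{lincomb} uses a hyperplane $\{x_1=0\}$ through the \emph{center of the sphere}, which on $\S^n$ is a great sphere, and the operation $f_0^+ - f_0^-$ identifies the contributions at a point with those at its antipode. So when we peel off dimensions to reduce to the case $n=0$, the vertex $x$ of $P$ and the antipode $-x$ get merged, and it is the merged object whose A/B-type is forced; untangling this shows that \emph{either} $x$ \emph{or} $-x$ must be a vertex of $P'$ (the surviving A-type polytope at that merged point), but not necessarily both. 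Making this bookkeeping precise — carefully defining, at each stage of the induction, which of the two antipodal "slots" a given tile's trace occupies, and checking that B-type polytopes never accidentally produce an A-type trace at either slot — is the technical heart of the proof, and will closely parallel the case analysis already carried out in the proof of Theorem~\ref{lincomb}.
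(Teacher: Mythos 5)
Your local argument at $x$ has a genuine gap, and you half-noticed it yourself: the claim that ``each B-type polytope meeting $S$ through $x$ traces to a B-type polytope of $S$'' is false. B-type-ness is relative to the sphere under consideration. A B-type polytope $Q$ of $\S^n$ can itself have $x$ as a vertex --- this happens exactly when $Q$ is cut out by $n$ halfspaces whose bounding hyperplanes meet in the line $\R x$, so that the minimal face of $Q$ is the antipodal pair $\{x,-x\}$ (for $n=1$: a semicircle with endpoints $x$ and $-x$). For such a $Q$ the tangent cone at $x$ is pointed, so the trace $Q\cap S$ on a small sphere $S$ about $x$ contains no antipodal pair of $S$ and is an \emph{A-type} polytope of $S$. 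Hence your trace tiling of $S\cong\S^{n-1}$ may contain two or more A-type polytopes without $P'$ contributing any, and no contradiction with Theorem~\ref{sphere} arises. This is precisely why the stronger conclusion ``$x$ is a vertex of $P'$'' is false (already for $n=1$), and your proposed repair --- rerunning the induction of Theorem~\ref{lincomb} while ``merging antipodal slots'' --- is not carried out; as sketched it conflates two different operations: the lemma in the proof of Theorem~\ref{lincomb} intersects with a great subsphere (a hyperplane through the \emph{centre} of $\S^n$), where antipodal pairs survive, whereas your localization intersects with a small sphere around a \emph{point} of $\S^n$, where they do not.

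The paper's proof takes a different, structural route. It first classifies B-type polytopes: a B-type polytope is defined by $m\le n$ halfspaces with linearly independent normals, its minimal face is an $(n-m)$-dimensional subsphere, and in particular it has vertices if and only if $m=n$, in which case its vertex set is an antipodal pair $\{v,-v\}$. It then applies the no-lonely-vertex result at $x$: some second tile has $x$ as a vertex; if that tile is $P'$ we are done, and otherwise it is B-type and therefore also has $-x$ as a vertex. Finally one argues that $-x$ cannot be surrounded only by B-type polytopes (else the same would hold at $x$, contradicting $x\in P$), so $-x$ is a vertex of an A-type tile, which can only be $P'$. The ingredient your approach is missing is exactly this antipodal symmetry of the vertex sets of B-type polytopes: a purely local analysis at $x$ can never see $-x$, so it cannot by itself establish a disjunctive conclusion of the form ``$x$ or $-x$ is a vertex of $P'$.''
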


\begin{proof}
The cases $n=0$ and $n=1$ are obvious. So, let $n>1$. 

We proceed by considering possible shapes of B-type polytopes. Any
B-type polytope is cut out of the unit sphere $\S^n$, embedded in
$\R^{n+1}$, by halfspaces $H^+_1, \ldots, H^+_m$,
where $x \in \bigcap_i H^+_i$. Each such halfspace
$H^+_i$ can be represented by a vector $c_i$ which is normal
to the bounding hyperplane $H_i = \partial H^+_i$: 
$H^+_i = \{ x : c_i x \ge 0 \}$. We can assume the set of
hyperplanes to be minimal. That is, the normal vectors of these
hyperplanes are linearly independent (otherwise there would be a
superfluous defining inequality $c_i x \ge 0$; that means, a
superfluous halfspace). Therefore, the intersection 
$M:= \bigcap_i H_i$ is an $(n+1-m)$-dimensional linear subspace.
Since the considered polytope is B-type, it contains two endpoints of
some diameter of the sphere. Thus $M$ has to be at least of 
dimension one. It follows  $m \le n$, and the intersection 
$\S^n \cap M$ (which is the boundary of the considered B-type 
polytope), is an $(n-m)$-dimensional unit sphere. In particular, 
a B-type polytope has a vertex $x$ if and only if it is defined by 
exactly $n$ halfspaces. Then, $-x$ is also a vertex of this 
B-type polytope.

By Theorem \ref{lvert}, the vertex $x$ of $P$ is a vertex of some
further polytope. Either A-type  (then $P'$), or B-type, say,
$P''$. In the latter case, by the reasoning above, $-x$ is a vertex of
$P''$, too.  

If $-x$ would be surrounded entirely by B-type polytopes, $x$ also
would, which is impossible. Thus, $-x$ is a vertex of an A-type
polytope. The only possibility is that $-x$ is a vertex of $P'$. 
\end{proof}

\section*{Acknowledgments}
It is a pleasure to thank Nikolai Dolbilin, Alexey Tarasov and in
particular Alexey Garber for valuable discussions.
D.F.\ acknowledges support by the German Research Council (DFG) within
the Collaborative Research Centre 701.

\end{document}